\documentclass[12pt]{article}

\usepackage{amsmath,amssymb,amsthm}
\usepackage{mathtools}

\usepackage{indentfirst}

\usepackage{graphicx}
\usepackage{tikz}
\usetikzlibrary{arrows.meta,fit}
\usepackage[dvipsnames,svgnames,table]{xcolor}

\usepackage{enumitem}

\usepackage{hyperref}
\hypersetup{
    colorlinks,
    bookmarksnumbered=true
}

\newtheorem{theorem}{Theorem}
\newtheorem{lemma}[theorem]{Lemma}

\theoremstyle{definition}

\newtheorem{problem}[theorem]{Problem}

\theoremstyle{remark}

\newcommand{\ZZ}{\mathbb{Z}}

\newcommand{\abs}[1]{\left\lvert #1 \right\rvert}

\title{Improved Bounds on the Szeged--Wiener Gap and the BKLPS Conjecture}

\author{Lily Zhang\thanks{University of California, Berkeley\@.  Email address: {\tt pink@berkeley.edu}.},
\quad
Evan Li\thanks{University of California, San Diego\@. Email address: {\tt evl012@ucsd.edu}.}}
\date{September 17, 2026}

\begin{document}

\maketitle

\begin{abstract}
    \sloppy
    Bonamy--Knor--Lužar--Pinlou--Škrekovski (2017) define $K_n^t$ to be the complete graph of $n-1$ vertices but with an extra vertex that's adjacent to $t$ vertices of the complete graph part. They propose a stronger conjecture which asserts that if $G$ is a finite simple $2$-connected graph of order $n \ge 10$ not isomorphic to $K_n$, $K_n^2$, nor $K_n^{n-2}$, then the Szeged--Wiener gap of $G$ is $\eta(G) \ge 2n$. We improve upon their work to tighten the bounds on the Szeged--Wiener gap, allowing us to prove this conjecture in the affirmative. Afterwards, we construct graphs attaining equality for each $n \ge 10$ and pose a problem for interested readers to determine a necessary and sufficient condition for equality.
\end{abstract}

\section{Introduction}

Topological indices are often used to study molecular graphs to predict the properties of the corresponding molecules. In particular, one of the most studied ones is the Wiener index, which can be used to help predict boiling points of paraffins~\cite{wiener1947structural}. The original construction underlying the Szeged index was introduced by Gutman~\cite{gutman1994formula}, and it was subsequently developed as the Szeged index by Khadikar et al.~\cite{khadikar1995szeged}. Since the Szeged index extends the edge formula for the Wiener index of trees, the difference between these two indices, the Szeged--Wiener gap, is a natural quantity to study.

The work of Bonamy et al.~\cite{bonamy2017difference} on the Szeged--Wiener gap establishes as a theorem a lower bound on the Szeged--Wiener gap to imply a conjecture of Nadjafi-Arani et al.~\cite{nadjafiarani2012graphs}. They apply their method to strengthen existing work on the Szeged--Wiener gap of bipartite graphs, graphs of girth at least five, and the revised Szeged--Wiener gap.

In our continuation of their work, we adopt their notation, which we reintroduce here with some deviations. Throughout this work, graphs are always deemed finite, simple, and connected.

For $u,v\in V(G)$, denote by $d_G(u,v)$ to be the \textbf{distance} between $u$ and $v$, i.e., the length of the shortest path between $u$ and $v$. The \textbf{Wiener index} is defined to be the sum of distances over all unordered pairs of vertices $\{a,b\} \subset V(G)$
    \[ W(G) = \sum_{\{u,v\} \subseteq V(G)} d_G(u,v). \]
Given an edge $ab \in E(G)$, define
    \[ n_{ab}(a) = \abs{\{ c \in V(G): d_G(c,a) < d_G(c,b) \}}, \]
to be the number of vertices strictly closer to $a$ than to $b$, and define $n_{ab}(b)$ analogously, so $n_{ab}(b) = n_{ba}(b)$ holds. The \textbf{Szeged index} of $G$ is defined to be
    \[ Sz(G) = \sum_{ab \in E(G)} n_{ab}(a) \cdot n_{ab}(b). \]

The quantity that we focus on in this paper is the \textbf{Szeged--Wiener gap} of $G$, which is
    \[ \eta(G)=Sz(G)-W(G). \]
Klavžar, Rajapakse, and Gutman~\cite{klavzar1996szeged} proved that $Sz(G) \ge W(G)$ for every connected graph $G$, so this gap is nonnegative.

Let $n,t$ be integers with $t \in [1, n-1]$. The notation $K_n^t$ is used for the graph obtained from the complete graph on $n-1$ vertices $K_{n-1}$ by adding one vertex adjacent to exactly $t$ vertices of $K_{n-1}$. We also denote by $C_n$ the cycle graph on $n$ vertices.

Nadjafi-Arani, Khodashenas, and Ashrafi~\cite{nadjafiarani2012graphs} classified graphs with small Szeged--Wiener gap and proposed a lower bound in terms of the noncomplete blocks of a graph. Klavžar and Nadjafi-Arani~\cite{klavzar2014improved} later established stronger bounds dependent on the girth and the length of a longest isometric cycle. Bonamy et al.~\cite{bonamy2017difference} proved that every $2$-connected noncomplete graph $G$ of order $n$ has Szeged--Wiener gap $\eta(G) \ge 2n-6$, characterized the equality cases, and then proposed a stronger conjecture involving $K_n^t$.

The main goal of this work is to prove the stronger conjecture of Bonamy et al.~\cite[Conjecture~5]{bonamy2017difference}, which states that when $G$ is $2$-connected of order $n \ge 10$ satisfying $G \not\cong K_n, K_n^2, K_n^{n-2}$, then
    \[ \eta(G) \ge 2n. \]
We refer to this conjecture as the BKLPS Szeged--Wiener Gap Conjecture, and this is later proved as Theorem~\ref{theorem:6}.

Due to their prominence in this work, we say the graph $G$ is an $n$-\textbf{exceptional} graph when it's isomorphic to the particular graphs $K_n$, $K_n^2$, or $K_n^{n-2}$, and $n$-\textbf{unexceptional} otherwise. We will rephrase the conjecture in terms of this terminology when it comes time to prove it in Theorem~\ref{theorem:6}.

To do this, we expand upon the results of Bonamy et al.~\cite{bonamy2017difference} by proving theorems strengthening their work. Afterwards, equality cases for the bounds we show in the paper are also discussed. We present a problem for interested readers to continue on with this research on the Szeged--Wiener gap.
\section{Preliminaries}

We first introduce the terminology of Bonamy et al.~\cite{bonamy2017difference} that will be used throughout the proofs.

Given distinct $a,b \in V(G)$, an edge $xy \in E(G)$ is considered $\{a,b\}$-\textbf{good} if
\begin{align*}
    d_G(a,x) < d_G(a,y)&, \; d_G(b,y) < d_G(b,x) \\
        &\text{or} \\
    d_G(a,y) < d_G(a,x)&, \; d_G(b,x) < d_G(b,y)
\end{align*}
Let $g_G(a,b)$ be the number of $\{a,b\}$-good edges of $G$, with
    \[ \eta_G(a,b)=g_G(a,b)-d_G(a,b). \]
By convention, we define $g_G(a,a), \eta_G(a,a) = 0$.

Given $u\in V(G)$, let $N_G(u)$ and $N_G[u]$ denote the open and closed
neighborhoods of $u$ respectively, so $N_G[u] = N_G(u) \cup \{u\}$.

A \textbf{block} of $G$ is a subsgraph $B \subseteq G$ which is maximal with no cut-vertex.

The \textbf{contribution} of a vertex $a \in V(G)$ is defined to be
    \[ c_G(a) = \sum_{b \in V(G)} \eta_G(a,b) = \sum_{b \in V(G)} [ g_G(a,b) - d_G(a,b) ] \]
Rather than invoke Bonamy et al.~\cite[Lemma~11]{bonamy2017difference}, we show a significantly stronger identity demonstrating the exact Szeged--Wiener gap of $K_n^t$.

\begin{theorem}\label{theorem:1}
    Let integers $n,t$ satisfy $t \in [1, n-1]$. Then
        \[ \eta(K_n^t), \; \eta(K_n^{n-t}) = 2(t-1)(n-t-1). \]
\end{theorem}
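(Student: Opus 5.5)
The plan is a direct computation of both indices, using the explicit structure of $K_n^t$; none of the good-edge machinery is needed. Write $V(K_n^t) = Q \cup \{v\}$, where $Q$ induces $K_{n-1}$. Let $T \subseteq Q$ be the $t$ neighbours of $v$, and let $S = Q \setminus T$, so $\abs{S} = s := n-1-t \ge 0$. The second identity follows from the first by substituting $n-t$ for $t$. Indeed, $2((n-t)-1)(n-(n-t)-1) = 2(n-t-1)(t-1)$, which is the same expression. This substitution is legitimate only when $n-t \in [1,n-1]$, that is, for $t \ge 1$, which is given. So I only treat $K_n^t$.

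\textbf{Wiener index.} Every pair of vertices is at distance $1$, except the pairs $\{v,y\}$ with $y \in S$. Those pairs are at distance exactly $2$, via any vertex of $T$; this needs $t \ge 1$. Hence
\[ W(K_n^t) = \binom{n}{2} + s. \]

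\textbf{Szeged index.} I classify the edges into four types and compute $n_{ab}(a)\, n_{ab}(b)$ for each.
\begin{enumerate}[label=(\roman*)]
\item For $x,y \in T$: every other vertex of $Q$ is adjacent to both, and $v$ is adjacent to both. The product is $1 \cdot 1$.
\item For $x,y \in S$: $v$ is at distance $2$ from both, and the rest of $Q$ is adjacent to both. The product is again $1$.
\item For $x \in T$, $y \in S$: $v$ is strictly closer to $x$. The product is $2 \cdot 1$.
\item For the edge $vx$ with $x \in T$: only $v$ is strictly closer to $v$. The vertex $x$ and all of $S$ are strictly closer to $x$, while the vertices of $T \setminus \{x\}$ are equidistant. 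The product is $1 \cdot (s+1)$.
\end{enumerate}
Summing over the four types gives
\[ Sz(K_n^t) = \binom{t}{2} + \binom{s}{2} + 2ts + t(s+1). \]

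\textbf{Subtracting.} Expand $\binom{n}{2} = \binom{t+s+1}{2} = \binom{t}{2} + \binom{s}{2} + ts + t + s$. Then
\[ \eta(K_n^t) = 2ts + ts + t - ts - t - 2s = 2s(t-1) = 2(t-1)(n-t-1). \]

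The main obstacle is the equidistance bookkeeping in types (ii) and (iv). In type (ii), $v$ must not be counted for either endpoint. In type (iv), the vertices of $S$ favour $x$, while those of $T \setminus \{x\}$ are ties. Finally, I would check the degenerate cases: $s=0$ (so $t = n-1$ and $K_n^t = K_n$, with gap $0$) and $t=1$ (a pendant vertex, with gap $0$). The formula gives $0$ in both, as it should.
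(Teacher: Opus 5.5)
Your computation is correct and is essentially the paper's proof: the same split of $K_{n-1}$ into the $t$ neighbours and $n-1-t$ non-neighbours of the low-degree vertex, the same four edge types with the same products, and the same symmetry $t \mapsto n-t$ for the second identity. Your $W = \binom{n}{2} + s$ is just a rewriting of the paper's $\binom{n-1}{2} + t + 2(n-t-1)$, and one tiny slip is harmless: $n-t \in [1,n-1]$ is equivalent to $t \in [1,n-1]$, not merely $t \ge 1$, but both bounds are given.
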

\begin{proof}
    We only need to show $\eta(K_n^t) = 2(t-1)(n-t-1)$, then it would immediately follow that
        \[ \eta(K_n^{n-t}) = 2(n-t-1)(t-1) = 2(t-1)(n-t-1). \]
    Let $G = K_n^t$, and let $z \in V(K_n^t)$ be the vertex with degree $t$, defining the partitions of the remaining $n-1$ vertices
        \[ A = N_G(z), \; B = V(G) \setminus (A \cup \{z\}). \]
    Note that $\abs{A} = t$, $\abs{B} = n-t-1$.

    The $\binom {n-1}2$ pairs within $A \cup B$ have distance $1$, each $a \in A$ satisfies $d_G(z, a) = 1$, and each $b \in B$ satisfies $d_G(z,b) = 2$. Therefore the Wiener index of $G$ is
        \[ W(G) = \binom {n-1}2 + t + 2(n-t-1). \]
    
    We now compute the Szeged index by considering edges in $G$.
    \begin{itemize}
        \item Suppose first the pair $\{x,y\} \subseteq A$ or $B$ (with $\binom t2 + \binom{n-t-1}2$ possibilities). $z$ has equal distance from $x$ and $y$. All other vertices $a \in V(G) \setminus \{x,y,z\}$ satisfy $d_G(a, x) = 1 = d_G(a, y)$, so
            \[ n_{xy}(x), \; n_{xy}(y) = 1. \]
        \item Suppose $a \in A$, $b \in B$ (with $t(n-t-1)$ possibilities). Only $z$ is closer to $a$ than $b$ with the remaining $n-3$ vertices $x \not= z,a,b$ satisfy $d_G(x, a) = 1 = d_G(x, b)$. Therefore
            \[ n_{ab}(a) = 2, \; n_{ab}(b) = 1. \]
        \item The final case consists of the edges between $a \in A$ and $z$ (with $t$ possibilities). The only vertices closer to $a$ are the vertices in $B$. On the other hand, the other vertices in $A$ are adjacent to both $a$ and $z$, therefore
            \[ n_{az}(a) = n-t, \; n_{az}(z) = 1. \]
    \end{itemize}
    So
    \begin{align*}
        Sz(G)
        &= \binom t2 + \binom{n-t-1}2 + 2t(n-t-1) + t(n-t) \\
        &= \binom{n-1}2 + t + 2t(n-t-1).
    \end{align*}
    We conclude
        \[ \eta(G) = Sz(G) - W(G) = 2(t-1)(n-t-1), \]
    which was what we wanted.
\end{proof}
\section{Intermediate Bounds}

We start by proving an exceptional deletion lemma which improves upon the result of Bonamy et al.~\cite[Lemma~8]{bonamy2017difference}.

\begin{lemma}[Exceptional Deletion]\label{lemma:2}
    Let $G$ be an $n$-unexceptional $2$-connected graph of order $n \ge 4$, and let distinct $u, v \in V(G)$ satisfy $N_G[u] \subseteq N_G[v]$. If $G-u \cong K_{n-1}^2$ or $K_{n-1}^{n-3}$, then
        \[ \eta(G) - \eta(G-u) \ge n-2. \]
\end{lemma}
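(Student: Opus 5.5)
The plan is to compute the change in the gap pair by pair rather than recomputing $\eta(G)$ from scratch. Summing the good edges of each edge $xy$ over the pairs it separates gives $Sz(G)=\sum_{\{a,b\}} g_G(a,b)$, so
\[ \eta(G)=\sum_{\{a,b\}\subseteq V(G)} \eta_G(a,b). \]
Since $N_G[u]\subseteq N_G[v]$, any shortest path through $u$ can be rerouted through $v$. Hence $d_G(a,b)=d_{G-u}(a,b)$ for all $a,b\neq u$. Every $\{a,b\}$-good edge of $G-u$ is then still $\{a,b\}$-good in $G$, because the defining distance inequalities are unchanged. This gives
\[ \eta(G)-\eta(G-u) = c_G(u) + \sum_{\{a,b\}\subseteq V(G)\setminus\{u\}} \bigl(\eta_G(a,b)-\eta_{G-u}(a,b)\bigr) \ge c_G(u) + \sum_{\{a,b\}\not\ni u} \#\{\text{good edges at } u\}, \]
where every term on the right is nonnegative. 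It therefore suffices to find $n-2$ units among $c_G(u)=\sum_b \eta_G(u,b)$ and the edges incident to $u$ that are $\{a,b\}$-good for pairs avoiding $u$.

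Next I fix the structure. Write $G-u$ as a clique $Q\cong K_{n-2}$ together with a vertex $z$ adjacent to a set $A\subseteq Q$ with $\abs{A}=t\in\{2,n-3\}$. Put $B=Q\setminus A$, so $\abs{B}=n-3-t$ is either $n-5$ or $0$. The vertex $u$ has degree at least $2$ by $2$-connectivity, and $N_G(u)\subseteq N_G[v]$. I split into cases according to whether $z\in N_G(u)$ and how $N_G(u)$ meets $A$ and $B$.

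\emph{Case $u\not\sim z$.} Here $N_G(u)\subseteq Q$. If $N_G(u)=Q$, then $G$ is $K_{n}$ with two non-adjacent vertices $u,z$. In general, for each $b\in Q\setminus N_G(u)$ the pair $\{u,b\}$ has distance $1$ or $2$. For a neighbour $w$ of $u$, either edge $ub$ or $uw$ is good for many pairs, or the edges $wb$ are good for $\{u,b\}$; each such $b$ should give $\eta_G(u,b)\ge 1$. Similarly, the pair $\{u,z\}$ at distance $2$ should have several good edges through common neighbours in $A$. When $N_G(u)$ misses many clique vertices, the missing vertices supply the $n-2$ units directly. When $N_G(u)$ is almost all of $Q$, I instead count the edges $uz'$ with $z'\in A$ that become good for pairs $\{z,b\}$, $b\in B$, together with the edges $ub$.

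\emph{Case $u\sim z$.} A similar count applies. When $u$ is essentially a twin of $z$, or $G$ collapses to some $K_n^{s}$, I avoid counting and apply Theorem~\ref{theorem:1} directly. For example, if $G\cong K_n^3$ or $K_n^{n-3}$, then
\[ \eta(G)-\eta(G-u)=4(n-4)-2(n-4)=2n-8\ge n-2 \quad (n\ge 6). \]
The hypothesis that $G$ is $n$-unexceptional exactly rules out $G\cong K_n,K_n^2,K_n^{n-2}$, which are the configurations where this difference would be too small.

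The main obstacle is the bookkeeping when $N_G(u)$ is a large subset of $Q\cup\{z\}$: there $c_G(u)$ alone is small, and the needed units must come from edges at $u$ becoming good for pairs like $\{z,b\}$. I would first handle the extreme neighbourhoods by identifying $G$ with some $K_n^s$ and using Theorem~\ref{theorem:1}, then interpolate using monotonicity in $\abs{N_G(u)\cap B}$. Finally, I would check the small orders $n=4,5$ by hand, where $B$ may be empty and the inequality $2n-8\ge n-2$ fails.
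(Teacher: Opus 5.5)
Your opening reduction is correct and matches the paper's starting point, the identity it imports from Bonamy et al.'s Lemma~7. Since $N_G[u]\subseteq N_G[v]$ preserves distances in $G-u$, one has $\eta(G)-\eta(G-u)=c_G(u)+q$, where $q$ counts pairs $(\{a,b\},w)$ with $a,b\neq u$ and $uw$ being $\{a,b\}$-good. After that, however, the proposal proves nothing: the cases are handled by ``should give'' and ``a similar count applies'', and the two estimates that close the casework are missing.
\begin{itemize}
\item[(i)] If $d_G(u,x)=2$ and $x$ has $p_x$ neighbours in $N_G(u)$, then all $2p_x$ edges of the paths $uwx$ are $\{u,x\}$-good, so $\eta_G(u,x)\ge 2(p_x-1)$.
\item[(ii)] Every non-edge $w_1w_2$ inside $N_G(u)$ makes both $uw_1$ and $uw_2$ good for $\{w_1,w_2\}$, so $q$ is at least twice the number of non-edges in $N_G(u)$.
\end{itemize}
Your bound $\eta_G(u,b)\ge 1$ per non-neighbour yields only about $n-2-\abs{N_G(u)}$ units. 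The fallback you propose for large $N_G(u)$ when $u\not\sim z$ is vacuous. There $N_G(u)\subseteq Q$ is a clique and $G-u$ has diameter $2$, so $q=0$. In particular, an edge $uw$ with $w\in A$ is never $\{z,b\}$-good: $z$ and $b$ are both adjacent to $w$, so neither is strictly closer to $u$. Contrary to your diagnosis, $c_G(u)$ is not small in that regime. If $N_G(u)=Q\setminus\{x\}$, then (i) alone gives $\eta_G(u,x)\ge 2(n-4)$. In general the paper gets $2(n-2-s)(s-1)\ge 2(n-4)$ for $s=\abs{N_G(u)}\in[2,n-3]$.

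The interpolation step is also unsound. Take $G-u\cong K_{n-1}^2$ with $u\not\sim z$. Enlarging $N_G(u)$ from $Q\setminus\{x\}$ (with $x\in B$) to $Q$ drops the difference from at least $2(n-4)$ to $\eta(K_n^2)-\eta(K_{n-1}^2)=2$. So there is no monotonicity in $\abs{N_G(u)\cap B}$, and one of your extreme anchors is precisely the excluded graph.

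For $G-u\cong K_{n-1}^2$ with $u\sim z$, estimate (ii) carries the argument whenever $N_G(u)$ meets $B$. Each $y\in N_G(u)\cap B$ is a non-neighbour of $z$ and contributes $2$ to $q$. For instance, when $G\cong K_n^3$ one has $c_G(u)=0$, and all $2(n-4)$ units come from $q$. The thin configuration $N_G(u)=\{z,a\}$ with $A=\{a,a'\}$ needs its own observation: $za'$ is an extra $\{u,x\}$-good edge for each $x\in B$. This gives $\eta_G(u,x)\ge 1$, which together with $\eta_G(u,a')\ge 2$ yields the required $n-2$.

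Finally, there is a structural slip. Since $\abs{Q}=n-2$, you have $\abs{B}=n-2-t\in\{n-4,1\}$. In the $K_{n-1}^{n-3}$ case $B$ is a single vertex and never empty; otherwise $G-u$ would be complete.
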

\begin{proof}
    Let $H = G-u$. One can verify that when $n=4$, $G$ being $n$-unexceptional, $2$-connected, and $H \cong K_{n-1}^2$ or $K_{n-1}^{n-3}$ is not possible.

    If $n=5$, then $H = K_4^2$ with Szeged--Wiener gap $\eta(H) = 9-7 = 2$. A finite check on $G$ shows that $\eta(G) \ge 5$, so $\eta(G) - \eta(H) \ge 3 = 5-2$.

    From here on out, we assume $n \ge 6$ and use the proof by Bonamy et al.~\cite[Lemma~7]{bonamy2017difference}. Its conditions are satisfied since both $K_{n-1}^2$ and $K_{n-1}^{n-3}$ are $2$-connected and noncomplete. Following the notation of their proof, let $q$ be the number of couples $(\{a,b\}, w)$ such that $a,b \in V(H)$, $w \in N_G(u)$, and $uw$ is $\{a,b\}$-good.
    Bonamy et al.~\cite[Lemma~7, Eq.~(4)]{bonamy2017difference} give the identity
        \[ \eta(G) - \eta(H) = \sum_{x \in V(H)} \eta_G(u,x) + q. \tag{1} \]
    When $x \not\in N_G[u]$, the number of neighbors $w$ of $x$ satisfying $d_G(u,w) < d_G(u,x)$ is denoted
        \[  p_x = \abs{\{ w \in N_G(x): d_G(u,w) < d_G(u,x) \}}. \]
    Bonamy et al.~\cite[Lemma~7, (C2)]{bonamy2017difference} give the estimate
        \[ \eta_G(u,x) \ge 2(p_x - 1). \tag{2} \]
    Finally, Bonamy et al.~\cite[Lemma~7, (C1)]{bonamy2017difference} state that if $t$ is the number of non-edges in $N_G(u)$, then
        \[ q \ge 2t. \tag{3} \]
    Those are the facts we use from their proof, so we may begin. We consider casework on whether $H \cong K_{n-1}^2$ or $K_{n-1}^{n-3}$.

    \textbf{Case 1 ($H \cong K_{n-1}^2$).} Let $z \in V(H)$ be the vertex with degree $2$, writing $N_H(z) = \{a,b\}$. Let $R = V(H) \setminus \{z,a,b\}$, and $S = N_G(u)$, with $r, s$ denoting their respective sizes. We split into subcases depending on whether $z \in S$.

    \textbf{Subcase 1a ($z \not\in S$).} This means $S \subseteq V(H) \setminus \{z\}$, so $s \in [2, n-2]$. The lower bound follows from $2$-connectedness. If $s = n-2$, then $S = V(H) \setminus \{z\}$, further implying $G \cong K_n^2$. This is a contradiction, so $s \in [2, n-3]$.

    There are $n-2-s$ vertices in $H-z \cong K_{n-2}$ but not $S$. Let $x$ be one of them. So $x$ is adjacent to all vertices of $S$ and $d_G(u,x) = 2$, therefore $p_x = s$. By (1) and (2) respectively,
        \[ \eta(G) - \eta(H) \ge \sum_{x \in V(H)} \eta_G(u,x) \ge 2(n-2-s)(s-1). \]
    We know $n-2-s, s-1 \ge 1$, and $(n-2-s) + (s-1) = n-3$ is constant in $s$, so the product is minimized when $s-1$ or $n-2-s = 1$. Therefore
        \[ 2(n-2-s)(s-1) \ge 2 \cdot 1 (n-4) \ge n-4 + 2 = n-2. \]
    
    \textbf{Subcase 1b ($z \in S$).} We first claim $S \cap \{a,b\} \not= \varnothing$. Clearly $v \in S$ since $u \in N_G[u] \subseteq N_G[v]$. Also, $z \in N_G[u] \subseteq N_G[v]$. If $v \not= z$, then $v$ is adjacent to $z$, so $v \in \{a,b\}$ is our desired element. If $v=z$, then $\abs{S} \ge 2$ by $2$-connectivity, so there exists $w \in S \setminus \{z\}$. $N_G[u] \subseteq N_G[z]$ implies $w$ is adjacent to $z$, so $w \in \{a,b\}$. This proves the claim. 
    
    Now let $T = R \cap S$ with $t = \abs{T}$. First, suppose $t \ge 1$. Each $y \in T$ is not adjacent to \(z\), but $y,z \in S$. This easily implies $q \ge 2t$ by (3).

    Let $x \in R \setminus T$. Since $S \cap \{a,b\} \not= \varnothing$, $d_G(u,x) = 2$. Moreover, $x$ is adjacent to every vertex $y \in T$ and to at least one of $a,b$ (the one in $S$), therefore $p_x \ge t+1$. There are $n-4-t$ such vertices $x$, so (1) and (2) gives us
    \begin{align*}
        \eta(G) - \eta(H)
        &\ge \sum_{x \in V(H)} \eta_G(u,x) + q \\
        &\ge (n-4-t) \cdot 2t + 2t \\
        &= 2t(n-3-t).
    \end{align*}
    Now $t$, $n-3-t \ge 1$, and $(t) + (n-3-t) = n-3$ is constant in $t$, therefore their product is minimized when $t$ or $n-3-t = 1$. So
        \[ 2t(n-3-t) \ge 2 \cdot 1(n-4) \ge n-4 + 2 = n-2, \]
    which proves the $t \ge 1$ case.

    Now suppose $t=0$. If $a, b \in S$, then for each of the $n-4$ vertices $x \in R$, $p_x = 2$ (the 2 are $a$ and $b$). Again by (1) and (2)
        \[ \eta(G) - \eta(H) \ge \sum_{x \in V(H)} \eta_G(u,x) \ge 2(n-4) \ge n-2. \]
    The final case is if $S = \{z,a\}$ or $\{z,b\}$, where we assume without loss of generality that $S = \{z,a\}$. Clearly $p_b = 2$ (the 2 are $z$ and $a$).
    
    Finally, let $x \in R$. The path $uax$ $u$--$x$ path, so its two edges are $\{u,x\}$-good. The edge $zb$ is also $\{u,x\}$-good, since $d_G(u,z) = 1 < 2 = d_G(u,b)$ while $d_G(x,b) = 1 < 2 = d_G(x,z)$. We conclude $g_G(u,x) \ge 3$, so $\eta_G(u,x) = g_G(u,x) - d_G(u,x) \ge 3-2 = 1$. Now (1) gives us
        \[ \eta(G) - \eta(H) \ge \sum_{x \in V(H)} \eta_G(u,x) \ge 2 + (n-4) \cdot 1 \ge n-2. \]
    This discloses case 1.

    \textbf{Case 2 ($H \cong K_{n-1}^{n-3}$).} Again, let $S = N_G(u)$ with $s = \abs{S}$, satisfying $s \in [2, n-1]$. If $s = n-1$, then $S = V(H)$, implying $G \cong K_n^{n-2}$. This is a contradiction, so $s \in [2, n-2]$.

    Let $x \in V(H) \setminus S$. The graph $H \cong K_{n-1} - e$ has a missing edge $e$, and $s \ge 2$, so $x$ is adjacent to a vertex in $S$. We derive $d_G(u,x) = 2$, hence
        \[ p_x = \abs{\{ w \in N_G(x): d_G(u,w) < 2 \}} = \abs{N_H(x) \cap S}. \]
    Also, $H \cong K_{n-1} - e$ shows that at most one vertex $x \in V(H)\setminus S$ isn't adjacent to some $y \in S$. At least $n-2-s$ vertices $x \in V(H) \setminus S$ satisfy $p_x = s$, while the possible remaining vertex satisfies $p_x \ge s-1$. Using (1) and (2) for the last time, we get
    \begin{align*}
        \eta(G) - \eta(H)
        &\ge \sum_{x \in V(H)} \eta_G(u,x) \\
        &\ge (n-2-s) \cdot 2(s-1) + 1 \cdot 2(s-2) \\
        &= 2(s-1)(n-1-s) - 2.
    \end{align*}
    Again, $s-1$, $n-1-s \ge 1$, and $(s-1) + (n-1-s) = n-2$ is constant in $s$, so the product is minimized when $s-1$ or $n-1-s = 1$. Finally
        \[ 2(s-1)(n-1-s) - 2 \ge 2 \cdot 1(n-3) - 2 \ge n-3 + 3-2 = n-2, \]
    exactly as desired.
\end{proof}

\begin{theorem}\label{theorem:3}
    Let $G$ be an $n$-unexceptional $2$-connected graph of order $n \ge 6$. Then
        \[ \eta(G) \ge 2n-4. \]
\end{theorem}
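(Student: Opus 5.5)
We will argue by induction on $n$, following the structure of the proof of Bonamy et al.~\cite[Theorem~4]{bonamy2017difference} (their $2n-6$ bound) and using Lemma~\ref{lemma:2} at the step where their argument loses ground.

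\textbf{Reduction.} If $G$ has two distinct vertices $u,v$ with $N_G[u] \subseteq N_G[v]$, we delete $u$ and let $H = G-u$. By \cite[Lemma~7]{bonamy2017difference}, $H$ is $2$-connected, and whenever $H$ is noncomplete,
\[ \eta(G) - \eta(H) \ge 2. \]
If no such pair $u,v$ exists, we do not induct. Instead we use the direct estimate of Bonamy et al.\ for this situation, which bounds $\sum_a c_G(a)$ from below. In that case every vertex $a$ has $c_G(a)$ bounded below by a linear quantity, and the resulting bound comfortably exceeds $2n-4$ once $n \ge 6$. We would recheck their computation to confirm that it yields at least $2n-4$ and not merely $2n-6$.

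\textbf{Inductive step.} Let $H = G-u$, which has order $n-1$. We split according to what $H$ is.
\begin{itemize}
\item If $H \cong K_{n-1}$, then $G \cong K_n^t$ for some $t$ with $2 \le t \le n-2$. Since $G$ is unexceptional, $3 \le t \le n-3$. Theorem~\ref{theorem:1} then gives $\eta(G) = 2(t-1)(n-t-1) \ge 4(n-4)$, and $4(n-4) \ge 2n-4$ for $n \ge 6$.
\item If $H \cong K_{n-1}^2$ or $K_{n-1}^{n-3}$, then $\eta(H) = 2(n-4)$ by Theorem~\ref{theorem:1}. Lemma~\ref{lemma:2} gives $\eta(G) \ge 2(n-4) + (n-2) = 3n-10$, and $3n-10 \ge 2n-4$ for $n \ge 6$.
\item Otherwise $H$ is $(n-1)$-unexceptional. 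If $n-1 \ge 6$, induction gives $\eta(H) \ge 2(n-1)-4$, and adding $2$ yields $\eta(G) \ge 2n-4$.
\end{itemize}

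\textbf{Base case $n=6$.} When $n=6$ and $H$ is $5$-unexceptional, the induction hypothesis is not available. Here we use the Bonamy et al.\ bound $\eta(H) \ge 2\cdot 5 - 6 = 4$, which gives $\eta(G) \ge 6$. We need $\eta(G) \ge 8$, so this case requires either a stronger increment $\eta(G)-\eta(H) \ge 4$ or a direct check.

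For this we would use the equality characterization in \cite{bonamy2017difference}: the graphs with $\eta = 2n-6$ are exactly $K_n^2$, $K_n^{n-2}$ and a few small exceptions. Hence a $5$-unexceptional $H$ has $\eta(H) \ge 5$, and the remaining deficit would be closed with (1)--(3) by a finite check on how $u$ attaches to $H$.

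\textbf{Main obstacle.} We expect the $n=6$ base case to be the hardest step, since it is essentially a finite enumeration of $2$-connected graphs on $6$ vertices. We would organize it by the structure of $H$, which is $2$-connected on $5$ vertices, together with the neighborhood $N_G(u)$. We would also check that the non-dominated case really gives $2n-4$ for all $n \ge 6$; this is the other place where the argument could fail.
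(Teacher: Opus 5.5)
\textbf{Gap.} Your reduction rests on a step that is false. Lemma~7 of Bonamy et al.\ does not show that $H=G-u$ is $2$-connected. Both $2$-connectivity and noncompleteness of $G-u$ are \emph{hypotheses} of that lemma, and they can fail for every admissible choice of $u$.

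Here is an example. Take the two $4$-cycles $v\,w_1\,p\,q$ and $v\,w_2\,r\,s$ sharing the vertex $v$, and add a vertex $u$ adjacent to $v,w_1,w_2$.
\begin{itemize}
\item This $G$ is $2$-connected and $8$-unexceptional.
\item Checking closed neighbourhoods shows that $(u,v)$ is the \emph{only} pair with $N_G[u]\subseteq N_G[v]$.
\item Yet $G-u$ is two $4$-cycles glued at $v$, so it has $v$ as a cut-vertex.
\end{itemize}
Such a $G$ lands in your third bullet. There, neither Lemma~7 nor your induction hypothesis (which concerns $2$-connected graphs) applies to $H$, and there is no other dominated pair to switch to. So the real obstacle is this structural case, which your plan does not address, rather than the base case $n=6$.

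\textbf{The missing idea.} The paper handles this case with the block decomposition taken from the proof of Theorem~4 of Bonamy et al. Suppose every dominated pair gives a $G-u$ that is complete or not $2$-connected. Then $v$ is the unique cut-vertex of $H$. Let $C_1,\dots,C_k$ ($k\ge 2$) be the blocks of $H$, and let $G_i=G[V(C_i)\cup\{u\}]$, of order $m_i<n$. One has $\eta(G)\ge\sum_i\eta(G_i)+2\binom k2$ and $\sum_i m_i=n+2(k-1)$. Each $\eta(G_i)$ is then bounded below by $2m_i-4$ via strong induction, or by $2m_i-5$ via Bonamy et al.'s Theorem~4 when $m_i=5$. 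This gives $\eta(G)\ge 2n-4-r+k(k-1)\ge 2n-4$, where $r$ is the number of $i$ with $m_i=5$. This is also why the induction must be strong: the pieces $G_i$ can be much smaller than $G-u$.

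\textbf{The rest of your plan.}
\begin{itemize}
\item The case $H\cong K_{n-1}$ via Theorem~\ref{theorem:1} is correct, and you handle it more explicitly than the paper does.
\item The case $H\cong K_{n-1}^2$ or $K_{n-1}^{n-3}$ via Lemma~\ref{lemma:2} matches the paper.
\item The no-dominated-pair case needs no recheck, since Lemma~10 of Bonamy et al.\ already gives $\eta(G)\ge 2n$ there.
\item The paper settles $n=6$ by exhaustive enumeration. Your Lemma~7-based base case would inherit the same $2$-connectivity issue, and enumeration avoids it.
\end{itemize}
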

\begin{proof}
    We plan to prove the statement by strong induction on $n$, where the base case $n=6$ is a finite check on the $156$ unlabeled graphs showing that $53$ are $6$-unexceptional and $2$-connected. Among them, the minimum Szeged--Wiener gap is $8 = 2 \cdot 6 - 4$.
    
    Now assume the statement holds up to $n-1 \ge 6$. If there are no distinct vertices $u, v \in V(G)$ satisfying $N_G[u] \subseteq N_G[v]$, then Bonamy et al.~\cite[Lemma~10]{bonamy2017difference} imply $\eta(G) \ge 2n$ (since $G \not\cong C_5$ is noncomplete), so we're done. Therefore, assume such a vertex pair $u,v$ exists, and let $H = G-u$.

    First suppose $H$ is $2$-connected and noncomplete, so Bonamy et al.~\cite[Lemma~7]{bonamy2017difference} directly imply $\eta(G) - \eta(H) \ge 2$.\\
    If $H$ is $(n-1)$-unexceptional, then our own induction hypothesis gives us
        \[ \eta(G) \ge 2 + \eta(H) \ge 2 + 2(n-1) - 4 = 2n - 4. \]
    Else if $H$ is $(n-1)$-exceptional, then since $H$ is noncomplete, Lemma~\ref{lemma:2} implies $\eta(G) - \eta(H) \ge n-2$. Combined with $\eta(H) = 2n-8$ by Theorem~\ref{theorem:1} (regardless of whether $H \cong K_{n-1}^2$ or $K_{n-1}^{n-3}$), we have
        \[ \eta(G) \ge n-2 + \eta(H) = n-2 + (2n-8) \ge 2n-4. \]
    We can therefore assume that for every distinct $u,v \in V(G)$ satisfying $N_G[u] \subseteq N_G[v]$, $G-u$ is either complete or not $2$-connected. Fix $u,v$ to be one such pair (and $H = G-u$).
    
    Following the proof by Bonamy et al.~\cite[Theorem~4, proof]{bonamy2017difference}, let $C_1, \dotsc, C_k$ be the blocks of $H$, and let $G_i = G[V(C_i) \cup \{u\}]$ with order $m_i$. Their proof already shows that $k \ge 2$, $G_i$ has order $m_i < n$, and the equations
    \begin{align*}
        \eta(G) &\ge \sum_{i=1}^k \eta(G_i) + 2\binom k2 \tag{1} \\
        \sum_{i=1}^k m_i &= n + 2(k-1) \tag{2}
    \end{align*}
    hold. If $m_i = 5$, then Bonamy et al.~\cite[Theorem~4]{bonamy2017difference} directly give us $\eta(G_i) \ge 2m_i - 5$. Else $m_i \ge 6$, where our induction hypothesis implies $\eta(G_i) \ge 2m_i - 4$.

    Therefore, if $r$ is the number of blocks with $m_i = 5$, then combining (1) and (2) yields
    \begin{align*}
        \eta(G)
        &\ge \sum_{i=1}^k \eta(G_i) + 2\binom k2 \\
        &\ge 2 \sum_{i=1}^k m_i - 5r - 4(k-r) + k(k-1) \\
        &= 2n + 4k - 4 - 5r - 4k + 4r + k(k-1) \\
        &= 2n - 4 - r + k(k-1) \\
        &\ge 2n-4,
    \end{align*}
    where the last inequality follows from $k(k-1) \ge k \ge r$. The induction is finished.
\end{proof}

Now that this intermediate bound is proven, we may start with the proof of the conjecture.
\section{Proof of Conjecture}

We start with 2 lemmas just for the main purpose of the conjecture, then we will prove the conjecture as Theorem~\ref{theorem:6}. These lemmas have very short proofs, but they are clean statements that allow us to push the bounds enough to resolve the conjecture.

\begin{lemma}\label{lemma:4}
    Let $G$ be a $2$-connected graph, and let distinct $u,v \in V(G)$ satisfy $N_G[u] \subseteq N_G[v]$. Suppose that $G-u$ is not $2$-connected, and $v$ is its unique cut-vertex. Let $C$ be a $\abs{V(C)}$-unexceptional block of $G-u$, defining $G_0 = G[V(C) \cup \{u\}]$ with order $m$. Then
        \[ \eta(G_0) \ge 2m-4. \]
\end{lemma}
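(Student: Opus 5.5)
The plan is to show that $G_0$ is itself a $2$-connected, $m$-unexceptional graph and then apply Theorem~\ref{theorem:3} to $G_0$ directly. The only exception is the case $m\le 5$, which we settle by a finite check.

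\textbf{Step 1: $G_0$ is $2$-connected.} Since $v$ is the unique cut-vertex of $G-u$, the vertex $v$ lies in every block of $G-u$, and in particular $v\in V(C)$. The hypothesis $N_G[u]\subseteq N_G[v]$ gives $uv\in E(G)$.

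Next, $u$ has a neighbor $w\in V(C)\setminus\{v\}$. Otherwise, in $G$, every path from $V(C)\setminus\{v\}$ to the rest of the graph would pass through $v$, so $v$ would be a cut-vertex of $G$, contradicting $2$-connectivity of $G$. Here we use $k\ge 2$ blocks, so such a "rest of the graph" exists.

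The block $C$ is $2$-connected: it is $\abs{V(C)}$-unexceptional, hence not $K_2$, and a block with at least $3$ vertices is $2$-connected. Adding $u$ with at least two neighbors $v,w$ in $C$ therefore keeps the graph $2$-connected. Also $N_{G_0}[u]\subseteq N_{G_0}[v]$ still holds, since we only intersect both sides with $V(C)\cup\{u\}$.

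\textbf{Step 2: $G_0$ is $m$-unexceptional.} We note $G_0-u=C$, and $C$ is noncomplete because $C$ is unexceptional. Suppose $G_0\cong K_m$, $K_m^2$ or $K_m^{m-2}$, and inspect every possible choice of the deleted vertex $u$.
\begin{itemize}
\item If $G_0\cong K_m$, then $C\cong K_{m-1}$.
\item If $G_0\cong K_m^2$: deleting the degree-$2$ vertex gives $K_{m-1}$. Deleting a clique vertex adjacent to it gives $K_{m-1}^1$, which is not $2$-connected. Deleting any other clique vertex gives $K_{m-1}^2$.
\item If $G_0\cong K_m^{m-2}$: deleting the special vertex or the unique clique vertex not adjacent to it gives $K_{m-1}$. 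Deleting any other vertex gives $K_{m-1}^{m-3}$.
\end{itemize}
Each outcome is either complete, not $2$-connected, or $(m-1)$-exceptional, and each contradicts the hypotheses on $C$. So $G_0$ is $m$-unexceptional.

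\textbf{Step 3: conclude.} If $m\ge 6$, Theorem~\ref{theorem:3} applied to $G_0$ gives $\eta(G_0)\ge 2m-4$ immediately.

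The remaining obstacle is the small case $m\le 5$, that is, $\abs{V(C)}\le 4$. A $2$-connected noncomplete graph on $3$ vertices does not exist. On $4$ vertices the only such graphs are $C_4$ and $K_4-e\cong K_4^2$, and the latter is exceptional. So $C\cong C_4$ and $m=5$.

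Then $u$ is adjacent to $v$ and to a subset of $N_C(v)$ containing at least one vertex. Up to symmetry there are only two graphs:
\begin{itemize}
\item $u$ adjacent to $v$ and one neighbor of $v$;
\item $u$ adjacent to $v$ and both neighbors of $v$.
\end{itemize}
A direct computation of $Sz-W$ for these two graphs, or the estimate (1)--(2) from the proof of Lemma~\ref{lemma:2}, should give $\eta(G_0)\ge 6=2m-4$. This small computation is the only step I expect to need care.
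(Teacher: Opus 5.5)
Your proposal is correct and follows the paper's structure: exclude $m\le 4$, apply Theorem~\ref{theorem:3} when $m\ge 6$, and reduce $m=5$ to $C\cong C_4$. There the paper invokes Bonamy et al.'s Lemma~7 with $\eta(C_4)=8$ to get $\eta(G_0)\ge 10$; this is your second suggestion via identity (1) in the proof of Lemma~\ref{lemma:2}, and a direct computation gives $\eta(G_0)=10$ and $\eta(G_0)=16$ for your two graphs, so the step you flagged goes through. Your Steps 1 and 2, verifying that $G_0$ is $2$-connected and $m$-unexceptional, make explicit what the paper only asserts.
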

\begin{proof}
    Suppose $m \le 4$. Then $\abs{V(C)} \le 3$, but the only such $2$-connected graph is $C \cong K_3$, contradicting noncompleteness. So $m \ge 5$.

    If $m \ge 6$, then since $G_0$ is $2$-connected and $m$-unexceptional, Theorem~\ref{theorem:3} immediately implies $\eta(G_0) \ge 2m-4$.

    It remains to consider $m=5$, so $\abs{V(C)} = 4$. Being $2$-connected, the possibilities for $C$ are $C_4$, $K_4^2$, and $K_4$. The latter two are $4$-exceptional, so $C \cong C_4$.

    It clearly follows that $N_{G_0}[u] \subseteq N_{G_0}[v]$. Since $G_0 - u = C \cong C_4$ is $2$-connected and noncomplete, Bonamy et al.~\cite[Lemma~7]{bonamy2017difference} imply
        \[ \eta(G_0) - \eta(C) \ge 2. \]
    The Szeged--Wiener gap of $C$ is $\eta(C) = \eta(C_4) = 16 - 8 = 8$, so  
        \[ \eta(G_0) \ge 2 + \eta(C) = 10 \ge 2 \cdot 5 - 4, \]
    resolving the $m=5$ situation.
\end{proof}

\begin{lemma}\label{lemma:5}
    Let $G$ be a $2$-connected graph, and let distinct $u,v$ satisfy\\
    $N_G[u] \subseteq N_G[v]$. Suppose that $G-u$ is not $2$-connected, and $v$ is its unique cut-vertex. Let $C_1,C_2$ be distinct $2$-connected blocks of $G-u$. Then
        \[ \sum_{\substack{a \in V(C_1) \setminus \{v\} \\ b\in V(C_2) \setminus \{v\}}} \eta_G(a,b) \ge 4. \]
\end{lemma}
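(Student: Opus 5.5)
The plan is to show that the sum is a sum of nonnegative terms, and then find a few pairs $(a,b)$ with $a\in V(C_1)\setminus\{v\}$, $b\in V(C_2)\setminus\{v\}$ whose terms add up to at least $4$. Every term is nonnegative: by choosing a shortest $a$--$b$ path, each of its edges is $\{a,b\}$-good, so $g_G(a,b)\ge d_G(a,b)$ and $\eta_G(a,b)\ge 0$. It therefore suffices to exhibit pairs that have extra good edges lying off a chosen geodesic.

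\textbf{Step 1: geometry of the pairs.} Every neighbour $w$ of $u$ lies in $N_G[v]$. Hence $d_G(x,v)\le d_G(x,u)$ for every $x\ne u$. In $G-u$ any path from $C_1\setminus\{v\}$ to $C_2\setminus\{v\}$ passes through $v$. In $G$ it passes through $v$ or through $u$. Therefore
\[ d_G(a,b)=d_G(a,v)+d_G(v,b), \]
and we may always fix a geodesic that goes through $v$ and avoids $u$. Any $\{a,b\}$-good edge incident to $u$ then adds $1$ to $\eta_G(a,b)$ beyond the geodesic. Since $G$ is $2$-connected, $u$ has a neighbour $w_1\in V(C_1)\setminus\{v\}$ and a neighbour $w_2\in V(C_2)\setminus\{v\}$. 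Otherwise $v$ would separate $C_1\setminus\{v\}$ or $C_2\setminus\{v\}$ from the rest of $G$.

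\textbf{Step 2: the pair $(w_1,w_2)$.} These two vertices are nonadjacent, since they lie in different blocks of $G-u$, so $d_G(w_1,w_2)=2$ via $w_1vw_2$. The edges $w_1u$ and $uw_2$ are $\{w_1,w_2\}$-good. For $w_1u$ this holds because $d_G(w_1,w_1)=0<1=d_G(w_1,u)$ and $d_G(w_2,u)=1<2=d_G(w_2,w_1)$, and $uw_2$ is symmetric. Thus $\eta_G(w_1,w_2)\ge 2$.

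\textbf{Step 3: two more units.} I expect this casework to be the main obstacle. Since $C_1$ is $2$-connected, $w_1$ has a neighbour $x\in V(C_1)\setminus\{v\}$. Consider the pair $(x,w_2)$; I will show it contributes at least $1$ extra unit, and at least $2$ in the favourable cases.
\begin{itemize}
\item If $x\in N_G(u)$, then Step 2 applies verbatim and $\eta_G(x,w_2)\ge 2$.
\item If $x$ is adjacent to neither $u$ nor $v$, then $d_G(x,u)=2$ and $d_G(x,w_2)=3$. In this case both $w_1u$ and $uw_2$ are $\{x,w_2\}$-good, giving $\eta_G(x,w_2)\ge 2$.
\item If $x$ is adjacent to $v$ but not to $u$, then $w_1u$ is still $\{x,w_2\}$-good, since $d(x,w_1)=1<2=d(x,u)$ and $d(w_2,u)=1<2=d(w_2,w_1)$. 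So $\eta_G(x,w_2)\ge 1$.
\end{itemize}
In the last case I apply the symmetric argument on the $C_2$ side. A neighbour $y\in V(C_2)\setminus\{v\}$ of $w_2$ gives $\eta_G(w_1,y)\ge 1$. The pairs $(w_1,w_2)$, $(x,w_2)$, $(w_1,y)$ are distinct, so their terms sum to at least $2+1+1=4$. In the other cases $(w_1,w_2)$ and $(x,w_2)$ already give at least $4$.

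The remaining care is to check in each case that the counted good edges through $u$ really lie off the chosen geodesic through $v$. This holds automatically because that geodesic avoids $u$.
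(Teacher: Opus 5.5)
Your proof is correct and is essentially the paper's argument: the same three pairs $\{w_1,w_2\}$, $\{x_1,w_2\}$, $\{w_1,x_2\}$, each gaining the extra good edge $w_iu$ off a geodesic through $v$ that avoids $u$. Your additional subcases giving $\eta_G(x,w_2)\ge 2$ are valid but unnecessary, since the symmetric three-pair count already works in every case. You also state explicitly that all the remaining terms are nonnegative, which the paper uses without saying.
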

\begin{proof}
    By $2$-connectedness of $G$, let $u$ have neighbors $w_1 \in V(C_1) \setminus \{v\}$, $w_2 \in V(C_2) \setminus \{v\}$. From $N_G[u] \subseteq N_G[v]$ and the fact that $w_1, w_2$ are in distinct blocks, we know $\eta_G(w_1, w_2) \ge 2$.

    By $2$-connectedness of $C_i$ and the fact that $w_i \in N_G(u) \subset N_G[v]$, $w_i$ has a neighbor (besides $v$) $x_i \in V(C_i) \setminus \{v,w_i\}$. Our big claim is the following:
        \[ \eta_G(x_i, w_j) \ge 1, \tag{1} \]
    where $i, j \in \{1,2\}$ are distinct.

    If $x_i \in N_G(u)$, then $N_G[u] \subseteq N_G[v]$ again implies $\eta_G(x_i, w_j) \ge 2 \ge 1$. Else if $x_i \not\in N_G(u)$, then it follows that the edge $w_i u$ is $\{x_i,w_j\}$-good. Indeed,
        \[ d_G(x_i,w_i) = 1 < d_G(x_i,u) \]
    and
        \[ d_G(w_j,u) = 1 < 2 \le d_G(w_j,w_i). \]
    We note that $d_G(w_j,w_i) \ge 2$ since they are in different blocks of $G-u$.

    $N_G[u] \subseteq N_G[v]$ also implies there's a shortest $x_i$--$w_j$ path in $G$ that doesn't go through $u$. Each edge of this path is $\{x_i,w_j\}$-good. The edge $w_i u$ is also $\{x_i,w_j\}$-good, hence
        \[ g_G(x_i,w_j) \ge d_G(x_i,w_j) + 1, \]
    which demonstrates (1).

    The three pairs $\{w_1,w_2\}$, $\{x_1,w_2\}$, $\{x_2,w_1\}$ are distinct, therefore the desired sum is
    \begin{align*}
        \sum_{\substack{a \in V(C_1) \setminus \{v\} \\ b\in V(C_2) \setminus \{v\}}} \eta_G(a,b)
        &\ge \eta_G(w_1,w_2) + \eta_G(x_1,w_2) + \eta_G(w_1,x_2) \\
        &\ge 2 + 1 + 1 \\
        &= 4,
    \end{align*}
    as required.
\end{proof}

With these lemmas in store, we are ready to attack the conjecture so that it's no longer a conjecture.

\begin{theorem}\label{theorem:6}
    Let $G$ be an $n$-unexceptional $2$-connected graph of order $n \ge 10$. Then
        \[ \eta(G) \ge 2n. \]
\end{theorem}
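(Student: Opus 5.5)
We prove Theorem~\ref{theorem:6} by strong induction on $n$, following the skeleton of Theorem~\ref{theorem:3}. The base case $n=10$ would be handled separately, either by a computer check or by running the argument below with Theorem~\ref{theorem:3} as the inductive input (which gives $2(n-1)-4$ instead of $2(n-1)$). For the inductive step, first suppose no distinct $u,v$ satisfy $N_G[u]\subseteq N_G[v]$. Then Bonamy et al.~\cite[Lemma~10]{bonamy2017difference} gives $\eta(G)\ge 2n$ directly. Otherwise we fix such a pair and set $H=G-u$.

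If $H$ is $2$-connected and noncomplete, there are two subcases. When $H$ is $(n-1)$-unexceptional, \cite[Lemma~7]{bonamy2017difference} and induction give $\eta(G)\ge 2+2(n-1)=2n$. When $H\cong K_{n-1}^2$ or $K_{n-1}^{n-3}$, Theorem~\ref{theorem:1} gives $\eta(H)=2(n-3)$, and Lemma~\ref{lemma:2} adds $n-2$, so $\eta(G)\ge 3n-8\ge 2n$ for $n\ge 8$. If $H$ is complete, then $G\cong K_n^t$ with $3\le t\le n-3$. Theorem~\ref{theorem:1} gives $\eta(G)=2(t-1)(n-t-1)\ge 4(n-4)\ge 2n$ for $n\ge 8$.

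The remaining case is the one where every such pair has $G-u$ not $2$-connected. We use the block decomposition $C_1,\dots,C_k$ of $H$ and set $G_i=G[V(C_i)\cup\{u\}]$, as in Theorem~\ref{theorem:3}, with $k\ge2$ and $\sum m_i=n+2(k-1)$. The crude bound there, $\sum\eta(G_i)+2\binom k2$, loses too much, because the inductive input now has to be $2m_i$ rather than $2m_i-4$. So the plan is to redo the decomposition of $\eta(G)$ more carefully. We write $\eta(G)$ as a sum over unordered pairs $\{a,b\}$ of $\eta_G(a,b)$. Pairs lying inside some $V(C_i)\cup\{u\}$ contribute at least $\eta(G_i)$, since distances and good edges localize to the block. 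Cross pairs $a\in C_i\setminus\{v\}$, $b\in C_j\setminus\{v\}$ are controlled by Lemma~\ref{lemma:5}, which gives at least $4$ per pair of $2$-connected blocks instead of $2$.

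Here we must check that $v$ is the unique cut-vertex of $H$. Since $N_G[u]\subseteq N_G[v]$ and $G$ is $2$-connected, every block of $H$ contains $v$: any block avoiding $v$ could be separated from the rest by a vertex other than $v$, because $u$ only attaches where $v$ does. Blocks that are single edges ($K_2$) need separate treatment; they are absorbed by the $2$-connectivity of $G$, since $u$ must then be adjacent to the leaf end. For each $G_i$ we use Lemma~\ref{lemma:4} when $C_i$ is unexceptional, giving $\eta(G_i)\ge 2m_i-4$. When $C_i$ is $K_{|C_i|}$, $K^2$ or $K^{|C_i|-2}$, we compute $\eta(G_i)$ directly: $G_i$ is $C_i$ plus one vertex $u$ dominated by $v$, close to a $K^t$-type graph, so Theorem~\ref{theorem:1}-style counting applies. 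Summing gives
\[
\eta(G)\ \ge\ 2n+4(k-1)-4k+4\binom k2,
\]
which is at least $2n$ once $k\ge2$. This uses $-4$ per block from Lemma~\ref{lemma:4} against $4\binom k2\ge 4(k-1)$.

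The main obstacle I expect is the blocks $C_i$ that are exceptional, especially complete ones, where $\eta(G_i)$ can be as small as $2(m_i-3)$ or even smaller. For $K_{m-1}$ plus $u$ attached to a clique containing $v$, the graph $G_i$ is a $K_m^t$ with small $\eta$. My first attempt would be to show that the extra cross-pair terms in Lemma~\ref{lemma:5} grow with $|N(u)\cap C_i|$, so the deficit is compensated. Failing that, I would pick the pair $(u,v)$ extremally, for example with $u$ of minimum degree, to rule out those configurations.
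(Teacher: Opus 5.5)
Your outline matches the paper's proof: strong induction with the computed base case $n=10$, Lemma~10 of Bonamy et al.\ when no pair with $N_G[u]\subseteq N_G[v]$ exists, and Lemma~7 plus induction (or Lemma~\ref{lemma:2} plus Theorem~\ref{theorem:1}) when $G-u$ is $2$-connected and noncomplete. Otherwise you use relations (1)--(2) with Lemmas~\ref{lemma:4} and~\ref{lemma:5}. Your separate treatment of complete $G-u$ (giving $K_n^t$ with $3\le t\le n-3$) and your check that $v$ is the only cut-vertex of $H$ are points the paper leaves implicit. However, the induction step is not finished. Your final chain needs $\eta(G_i)\ge 2m_i-4$ for \emph{every} $i$ and a contribution of at least $4$ for \emph{every} pair of blocks. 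That requires every $C_i$ to be $2$-connected, noncomplete and unexceptional, which are the hypotheses of Lemmas~\ref{lemma:4} and~\ref{lemma:5}. You leave exceptional blocks as an open ``obstacle'', and saying $K_2$ blocks are ``absorbed by 2-connectivity'' is not an argument. A $K_2$ block makes $G_i$ a triangle with $\eta(G_i)=0$, and Lemma~\ref{lemma:5} says nothing about it. A complete block with all vertices adjacent to $u$ makes $G_i\cong K_{m_i}$, again with $\eta(G_i)=0$. (The paper's proof also just applies Lemmas~\ref{lemma:4} and~\ref{lemma:5} to all blocks here, so this step has to be supplied either way.)

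The missing idea is that such blocks cannot occur in this case. The case hypothesis covers \emph{all} dominated pairs $(u',v')$, not just the fixed one, so you need neither an extremal choice nor compensation from cross terms.
\begin{itemize}
\item If $C_i$ is complete with $|V(C_i)|\ge 3$, every $w\in V(C_i)\setminus\{v\}$ has $N_G[w]\subseteq N_G[v]$. Take $w$ different from some neighbour $w_0$ of $u$ in $V(C_i)\setminus\{v\}$. Then $G-w-v$ is connected through $u$, which still sees $w_0$ and a vertex of every other block, and the other single deletions are harmless. So $G-w$ is $2$-connected. It is noncomplete because vertices of distinct blocks are nonadjacent, contradicting the case hypothesis.
\item If $C_i=\{v,w\}$ is a $K_2$ block, then $w\sim u$ and $N_G[w]=\{u,v,w\}\subseteq N_G[v]$. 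Here $G-w$ is $2$-connected, and it is complete only when $G\cong K_n^2$.
\item For $C_i\cong K_c^2$ or $K_c^{c-2}$, check where $v$ sits. If $v$ misses some $y\in V(C_i)$, then $u\not\sim y$ and $N_G[y]$ lies in $N_G[x]$ for a neighbour $x$ of $y$ in $C_i$; moreover $G-y$ is $2$-connected and noncomplete. If $v$ sees all of $C_i$, delete a suitable $w\ne v$ as in the complete case.
\end{itemize}
Hence every block is $2$-connected and unexceptional, and your displayed computation goes through.

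Two smaller points. Your fallback base case via Theorem~\ref{theorem:3} does not work: in the Lemma~7 branch it only yields $2n-4$, so the computation of Bonamy et al.\ is genuinely needed. Also $\eta(K_{n-1}^2)=2n-8$, not $2(n-3)$, so that branch gives $3n-10\ge 2n$, which still suffices for $n\ge 10$.
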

\begin{proof}
    We proceed via strong induction on $n$, where the base case $n=10$ was already computationally resolved by Bonamy et al.~\cite[p.~203]{bonamy2017difference}.

    Assume that the statement holds up to $n-1 \ge 10$. The graph $G$ is noncomplete, so if there are no distinct vertices $u,v \in V(G)$ satisfying $N_G[u] \subseteq N_G[v]$, then $\eta(G) \ge 2n$ by Bonamy et al.~\cite[Lemma~10]{bonamy2017difference}. We would be done, so assume such vertices $u,v$ exist.

    Suppose we can pick such a pair $u,v$ such that $H = G-u$ is $2$-connected and noncomplete. First, if $H$ is $(n-1)$-unexceptional, then summing the relations obtained from Bonamy et al.~\cite[Lemma~7]{bonamy2017difference} and the induction hypothesis respectively
    \begin{align*}
        \eta(G) - \eta(H) &\ge 2 \\
        \eta(H) &\ge 2(n-1)
    \end{align*}
    yields
        \[ \eta(G) \ge 2 + 2(n-1) = 2n, \]
    as desired. Otherwise, if $H$ is $(n-1)$-exceptional, then $H \cong K_{n-1}^2$ or $K_{n-1}^{n-3}$. This time, summing the relations from Lemma~\ref{lemma:2} and Theorem~\ref{theorem:1} respectively
    \begin{align*}
        \eta(G) - \eta(H) &\ge n-2 \\
        \eta(H) &= 2(2-1)(n-1-2-1) = 2n-8.
    \end{align*}
    yields
        \[ \eta(G) \ge 3n-10 \ge 2n. \]
    
    Suppose now that for each distinct $u,v \in V(G)$ satisfying $N_G[u] \subseteq N_G[v]$, the graph $G-u$ is either complete or not $2$-connected. Fix $u,v$ to be one such pair (and $H = G-u$).

    Again, we use the notation from the proof by Bonamy et al.~\cite[Theorem~4, proof]{bonamy2017difference} (where we previously used it to prove Theorem~\ref{theorem:3}). Let $C_1, \dotsc, C_k$ be the blocks of $H$, and let $G_i = G[V(C_i) \cup \{u\}]$ with order $m_i$. Their proof already shows that $k \ge 2$ and the relations
    \begin{align*}
        \eta(G) &\ge \sum_{i=1}^k \eta(G_i)
        + \sum_{1 \le i < j \le k} \sum_{\substack{a \in V(C_i) \setminus \{v\} \\ b \in V(C_j) \setminus \{v\}}} \eta_G(a,b) \tag{1} \\
        \sum_{i=1}^k m_i &= n + 2(k-1) \tag{2}
    \end{align*}
    hold. Combining everything we know, we compute
    \begin{align*}
        \eta(G)
        &\ge \sum_{i=1}^k \eta(G_i)
        + \sum_{1 \le i < j \le k} \sum_{\substack{a \in V(C_i) \setminus \{v\} \\ b \in V(C_j) \setminus \{v\}}} \eta_G(a,b) \tag{Relation (1)} \\
        &\ge \sum_{i=1}^k (2m_i - 4)
        + \sum_{1 \le i < j \le k} 4 \tag{Lemmas~\ref{lemma:4} and~\ref{lemma:5}} \\
        &= 2n + 4(k-1) - 4k + 4 \binom k2 \tag{Relation (2)} \\
        &= 2n - 4 + 2k(k-1) \\
        &\ge 2n, \tag{$k \ge 2$}
    \end{align*}
    which finishes the induction step that we hoped to show.
\end{proof}
\section{Further Results}

It in fact follows that the inequality is sharp no matter what constraint $n \ge a$ we place when $a \in \ZZ$ is a finite constant. Therefore, a stronger result would require reworking the statement more than simply changing the constant $a=10$. We invite the reader to determine a necessary and sufficient condition for when equality holds, and we phrase it as a problem here.

\begin{problem}
    Find all integers $n \ge 10$ and $n$-unexceptional $2$-connected graphs of order $n$ such that
        \[ \eta(G) = 2n. \]
\end{problem}

We now show the stronger sharpness result here, which demonstrates a sufficient condition for equality. One remark is that this definitely isn't a necessary condition, because counterexamples can be found.

\begin{lemma}
    Let $n \ge 10$ be an integer. There exists an $n$-unexceptional $2$-connected graph of order $n$ such that
        \[ \eta(G) = 2n. \]
\end{lemma}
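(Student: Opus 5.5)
The plan is to give an explicit family $G_n$ for every $n \ge 10$ and compute $\eta(G_n)$ exactly, as in the proof of Theorem~\ref{theorem:1}. The steps are: partition $V(G_n)$ into a few classes of structurally equivalent vertices; write down all distances, hence $W(G_n)$; then go through the edge types and count $n_{xy}(x)$ and $n_{xy}(y)$ for each, hence $Sz(G_n)$. A useful bookkeeping device is to write $Sz(G_n)=\abs{E(G_n)}+\sum_{xy}\bigl(n_{xy}(x)n_{xy}(y)-1\bigr)$ and $W(G_n)=\binom n2+\#\{\text{pairs at distance }2\}+\cdots$, so that only the few ``defective'' edges and non-adjacent pairs need to be tracked. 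Finally I would check that $G_n$ is $2$-connected and not isomorphic to $K_n$, $K_n^2$ or $K_n^{n-2}$ (for instance by comparing degree sequences or numbers of non-edges).

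The choice of family is guided by the equality analysis of Theorem~\ref{theorem:6}. Equality there forces one of two situations. The first is $\eta(G)-\eta(G-u)=2$ with $G-u$ an unexceptional graph already attaining $\eta=2(n-1)$. The second is the block case with $k=2$, where each $G_i$ attains $\eta(G_i)=2m_i-4$ and the cross term of Lemma~\ref{lemma:5} is exactly $4$.

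Simple perturbations of $K_n$ do not work, and I would first record this to steer the search. By Theorem~\ref{theorem:1}, $\eta(K_n^t)=2(t-1)(n-t-1)\ge 4n-16$ for $3\le t\le n-3$. $K_n$ minus two disjoint edges gives $4n-8$. $K_{n-2}$ with two non-adjacent vertices both attached to the same pair $\{a,b\}$ gives $4n-18$. All of these grow like $4n$, so no ``one near-clique'' graph works.

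The first construction I would try follows the block-case equality. Take a vertex $v$ and two blocks $C_1,C_2$ through $v$, each chosen so that $G_i=G[V(C_i)\cup\{u\}]$ attains $2m_i-4$ (for instance small graphs such as $C_4$-based blocks from Lemma~\ref{lemma:4}). Add a vertex $u$ adjacent to $v$ and to one further vertex $w_i$ in each $C_i$. I would then make one block grow with $n$ while the other stays fixed, and tune the growing block so that its contribution increases by exactly $2$ per added vertex. The tuning would use the step of Bonamy et al.~\cite[Lemma~7]{bonamy2017difference}: add a vertex $u'$ with $N[u']\subseteq N[v']$ so that $\eta$ increases by exactly $2$. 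Equality in (1)--(3) of Lemma~\ref{lemma:2}'s framework means $\sum_x\eta_G(u',x)+q=2$. This suggests taking $N(u')$ to be a clique with exactly one vertex $x$ having $p_x=2$. A natural concrete choice is a ``clique-path'' or ``fan'' in which each new vertex is a near-twin of an existing one.

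Once a candidate is fixed, the proof is the exact computation described in the first paragraph. Alternatively, one can prove $\eta(G_{n+1})=\eta(G_n)+2$ for a recursively defined family by tracking the good edges created by the new vertex, starting from a base graph on $10$ vertices with $\eta=20$; such a base graph exists since the paper notes that the bound $2n$ is sharp at $n=10$.

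The main obstacle is finding the right family. The computations in the search above show that most natural dense graphs overshoot, giving $\eta\approx 4n$. The increment-by-$2$ step also needs the new vertex's closed neighbourhood to produce no extra good edges, which is a delicate condition to maintain uniformly in $n$. If the recursive approach stalls, I would fall back on a small computer search at $n=10,11,12$ for graphs with $\eta=2n$, read off a pattern, and then verify that pattern by the class-by-class counting above.
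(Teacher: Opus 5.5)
There is a genuine gap: your proposal never exhibits a graph. For an existence statement like this, the witness together with its verification is the whole proof. Everything after your first paragraph is a search plan (gluing two blocks at $v$, increment-by-$2$ steps, a fallback computer search), and you yourself leave it open as ``the main obstacle.'' The recursive route does not close the gap. The only place the paper says the bound $2n$ is attained is derived from this very lemma, so citing it for a base graph on $10$ vertices with $\eta=20$ is circular. Even granting such a base graph, you have not specified a vertex $u'$ whose addition provably raises $\eta$ by exactly $2$ at every later step. Your equality analysis of Theorem~\ref{theorem:6} is also incomplete: it omits the case with no pair $N_G[u]\subseteq N_G[v]$, handled by \cite[Lemma~10]{bonamy2017difference}. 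That only affects your heuristics, though.

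Your heuristics also steer away from the answer. You conclude that no ``one near-clique'' graph works, but the paper's witness is exactly one. Take $Q\cong K_{n-2}$, fix distinct $a,b\in V(Q)$, and add new adjacent vertices $x,y$ with $x\sim a$ and $y\sim b$. This closes a $4$-cycle $a\,x\,y\,b$ on the clique edge $ab$. The graph is $2$-connected and has two vertices of degree $2$, so it is none of $K_n$, $K_n^2$, $K_n^{n-2}$. It has diameter $2$ with exactly $2n-6$ non-adjacent pairs, so $W-\abs{E}=2(2n-6)$.

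In your bookkeeping, the excesses $n_{pq}(p)\,n_{pq}(q)-1$ are:
\begin{itemize}
\item $1$ on each of the $2(n-4)$ edges from $\{a,b\}$ into $V(Q)\setminus\{a,b\}$;
\item $3$ on $ab$ and on $xy$;
\item $2n-5$ on $ax$ and on $by$.
\end{itemize}
These total $6n-12$, hence $\eta=(6n-12)-(4n-12)=2n$.

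In this family, adding a further clique vertex $c$ with $N_G[c]\subseteq N_G[a]$ raises $\eta$ by exactly $2$. So your increment idea is consistent with it, but only once the family is in hand.

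Finally, your value for $K_{n-2}$ with non-adjacent $x,y$ both joined to $\{a,b\}$ should be $4n-14$, not $4n-18$, since $n_{ax}(a)=n-2$. Indeed $4n-18$ would give $\eta=6<8$ at $n=6$, contradicting Theorem~\ref{theorem:3}.
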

\begin{proof}
    Consider $Q \cong K_{n-2}$. Choose distinct $a,b \in V(Q)$, and add $2$ new vertices $x,y$ with additional edges $ax, by, xy$, calling this graph $G = G_n$. It's easy to see that this choice of $G$ is both $2$-connected and $n$-unexceptional. We only need to show that $\eta(G) = 2n$.

    $G_n$ has diameter $2$ with only non-edges being the $2n-6$ pairs of vertices
        \[ \{ xq: q\in V(Q) \setminus \{a\} \} \text{ and } \{ yq: q \in V(Q) \setminus \{b\}\}. \]
    Therefore
        \[ W(G_n) = \binom n2 + (2n-6) = \frac{n^2 + 3n - 12}2. \]
    For the sake of computing the Szeged index, let $A = V(Q) \setminus \{a,b\}$, so $\abs{A} = n-4$.
    \begin{itemize}
        \item If $c,d \in A$ (with $\binom{n-4}2$ possibilities), then clearly
            \[ n_{cd}(c), \; n_{cd}(d) = 1. \]
        \item If $c \in A$, then consider the edge $ac$ or $bc$ (with $2(n-4)$ possibilities). We WLOG only consider $ac$, where we note that only $x$ is closer to $a$ than $c$ (besides itself). No vertex besides $c$ is closer to $c$ than $a$, so
            \[  n_{ac}(a) = 2, \; n_{ac}(c) = 1. \]
        \item Just considering the edge $ab$, only $x$ is closer to $a$, and $y$ is closer to $b$, so
            \[ n_{ab}(a), \; n_{ab}(b) = 2. \]
        \item Consider the edge $ax$ (the $by$ case is similar with a total of $2$ possibilities). Note that $d_G(a,y)$, $d_G(c,x) = 2$. Every vertex not $a$, $x$, nor $y$ is adjacent to $a$, but only $y$ is adjacent to $x$ (that isn't $a$ itself). So
            \[ n_{ax}(a) = n-2, \; n_{ax}(x)= 2. \]
        \item Just considering the edge $xy$, only $a$ is closer to $x$ than $y$ besides itself. A similar result holds for $y$, so
            \[ n_{xy}(x), \; n_{xy}(y) = 2. \]
    \end{itemize}
    Therefore
    \begin{align*}
        Sz(G_n)
        &= \binom{n-4}2 + 2 \cdot 2(n-4) + 4 + 2 \cdot 2(n-2) + 4 \\
        &= \binom{n-4}2 + 8n - 16 \\
        &= \frac{n^2+7n-12}2.
    \end{align*}
    Our conclusion is that
        \[ \eta(G_n) = Sz(G_n) - W(G_n) = \frac{n^2 + 7n - 12 - (n^2 + 3n - 12)}2 = 2n. \]
\end{proof}

\bibliographystyle{unsrt}
\bibliography{references}

\end{document}